\title{On a conjecture of Khoroshkin and Tolstoy}
\author[A. Appel]{Andrea Appel}
\address{
Dipartimento di Scienze Matematiche, Fisiche e Informatiche,	
Universit\`{a} degli Studi di Parma,
43124 Parma, Italy}
\email{andrea.appel@unipr.it}
\author[S. Gautam]{Sachin Gautam}
\address{Department of Mathematics, The Ohio State University,
Columbus, OH 43210, USA}
\email{gautam.42@osu.edu}
\author[C. Wendlandt]{Curtis Wendlandt}
\address{Department of Mathematics and Statistics,
University of Saskatchewan,
Saskatoon, SK S7N 5E6,
Canada}
\email{wendlandt@math.usask.ca}
\subjclass[2020]{ 
	Primary: 81R10. 
	Secondary: 17B37. 
}
\newtheorem*{thm}{Theorem}
\theoremstyle{definition}
\newtheorem*{rem}{Remark}
\numberwithin{equation}{section}
\numberwithin{figure}{section}
\newcommand{\Pseries}[2]{#1[\negthinspace[#2]\negthinspace]}
\newcommand{\Top}{\operatorname{T}}
\newcommand{\sfr}{\mathsf{r}^-}
\newcommand{\id}{\mathbf{1}}
\newcommand{\lp}{\left(}
\newcommand{\rp}{\right)}
\newcommand{\g}{\mathfrak{g}}
\newcommand{\h}{\mathfrak{h}}
\newcommand{\Sym}{\mathfrak{S}}
\newcommand{\vup}[1]{v_+}
\newcommand{\vdown}[1]{v_-}
\newcommand{\bfA}{\mathbf{A}}
\newcommand{\RR}{\mathcal{R}}
\newcommand{\C}{\mathbb{C}}
\newcommand{\nC}{\mathbb{C}^{\times}}
\newcommand{\N}{\mathbb{Z}_{\geq 0}}
\newcommand{\Z}{\mathbb{Z}}
\newcommand {\wh}[1]{\widehat{#1}}
\newcommand {\ul}[1]{\underline{#1}}
\newcommand{\ad}{\operatorname{ad}}
\newcommand {\aand}{\qquad\text{and}\qquad}
\newcommand{\op}{\operatorname{op}}
\newcommand {\fd}{finite-dimensional }
\newcommand {\Comment}[1]{}
\newcommand {\Omit}[1]{}
\newcommand {\Yhg}{Y_\hbar(\g)}
\newcommand{\Ynaughthg}{Y^0_{\hbar}(\g)}
\newcommand {\Ypmhg}{Y^\pm_\hbar(\g)}
\renewcommand {\sl}{\mathfrak{sl}}
\newcommand {\eg}{{\it e.g., }}
\newcommand {\ie}{{\it i.e.}, }
\newcommand{\bfI}{{\mathbf I}}
\DeclareMathAlphabet\EuScript{U}{eus}{m}{n}
\SetMathAlphabet\EuScript{bold}{U}{eus}{b}{n}
\newcommand{\boldR}{{\boldsymbol{\EuScript{R}}}}
\newcommand{\re}{\scriptscriptstyle{\text{re}}}
\newcommand{\im}{\scriptscriptstyle{\text{im}}}
\begin{document}

\begin{abstract}
We prove a \emph{no-go theorem} on the factorization 
of the lower triangular part in the Gaussian
decomposition of the Yangian's universal $R$--matrix,
yielding a negative answer to a conjecture of Khoroshkin and Tolstoy 
from [{\em Lett.~Math. Phys.} {\bf 36} 1996]. 
\end{abstract}

\maketitle

\section{Introduction}\label{sec:Intro}
Let $\g$ be a \fd complex semisimple Lie algebra
and $\Yhg$ the associated Yangian as defined by Drinfeld in
\cite{drinfeld-yangian-qaffine}.
Let $\RR(s)\in\Pseries{\lp\Yhg\otimes\Yhg\rp}{s^{-1}}$ be Drinfeld's
universal $R$--matrix \cite{drinfeld-qybe}. 
A constructive proof of the existence of $\RR(s)$ was recently obtained
by the last two authors and V. Toledano Laredo in \cite[Thm.~7.4]{GTLW}.
This was achieved by providing a direct construction of the 
components of the Gaussian decomposition 
\begin{equation*}
\RR(s) = \RR^+(s)\RR^0(s)\RR^-(s).
\end{equation*}
 The diagonal part $\RR^0(s)$ was first defined by the second author and
 Toledano Laredo in \cite[Thm.~5.9]{sachin-valerio-III} as a meromorphic function of $s$ acting on the tensor product of any two \fd representations of $\Yhg$.
Its explicit expansion as a formal series in $s^{-1}$ with coefficients in $\Yhg\otimes \Yhg$ was later provided in \cite[Thm.~6.7]{GTLW}. 
The component $\RR^-(s)$ is then obtained in \cite[Thm.~4.1]{GTLW} as the 
unique (unipotent, zero--weight) solution to a system of linear equations 
(see also \S \ref{ssec:IE} below).
Finally, the upper/lower triangular parts
are related by $\RR^+(s) = \RR^-_{21}(-s)^{-1}$.

In this paper, we focus our attention on $\RR^-(s)$.
When $\g=\sl_2$, a closed form formula for $\RR^-(s)$ is given in \cite[Thm.~5.5]{GTLW} (see also \cite[Lemma~5.1]{khoroshkin-tolstoy}), 
but no explicit formula is known in higher rank.
Following a conjecture of Khoroshkin and Tolstoy 
\cite[Conjecture p.~393]{khoroshkin-tolstoy}, it
was expected that $\RR^-(s)$ could be
expressed as an ordered product over the set of positive roots of $\g$ of $\sl_2$-type components (cf.~\cite[(5.3)]{khoroshkin-tolstoy}, \cite[\S5]{Stukopin07} and in particular \cite[Thm.~6.2]{soloviev}). 
The main result of this paper shows that
$\RR^-(s)$ does not admit any such factorization
for $\g\not\simeq\mathfrak{sl}_2$ (Theorem~\ref{thm:impossible}).

The conjecture of Khoroshkin and Tolstoy mentioned 
above refers to the lower triangular part of the universal $R$--matrix of the
{Yangian double} (cf.~\S \ref{ssec:KTC}). In \cite[\S2]{khoroshkin-tolstoy} 
the latter was conjectured to be isomorphic to the restricted quantum 
double of $\Yhg$. This conjecture was recently proven by the third author in \cite[Thm.~8.4]{Curtis-DYg} and led to the identification of the 
underlying $R$-matrices (and their components) in \cite[Thm.~9.6]{Curtis-DYg} (see also \cite{Stukopin07}).
Therefore, relying on these results,
the conjecture of Khoroshkin and Tolstoy is directly disproved by our main
result.

\noindent {\bf Outline of the paper.} In Section \ref{sec:YR} we give a 
brief overview of the Yangian $\Yhg$ and its universal $R$--matrix, with emphasis on the factor $\RR^-(s)$. 
In Section \ref{sec:impossible}, we state and prove our main result (Theorem~\ref{thm:impossible}).
In Section \ref{sec:rks}, we show that it yields a negative answer  
to the Khoroshkin--Tolstoy conjecture.
Finally, in Section \ref{ssec:qla}  we discuss analogies and differences
with the factorization problem for quantum affine algebras, referring 
in particular to the work of Damiani \cite[Thm. 2]{damiani}.

\noindent {\bf Acknowledgments.} This project was completed during
a \emph{Research in Pairs} visit at the Mathematisches Forschungsinstitut Oberwolfach, October
23-30, 2021 (2143R: {\em $R$--matrices
and Weyl group symmetries}). We are grateful to the entire
MFO team for providing us an exceptional research environment. We would
also like to thank Sasha Tsymbaliuk for 
various discussions about the factorization of universal
$R$--matrices in the trigonometric setting. AA was supported in part by the Programme \emph{FIL} of the University of Parma 
co-sponsored by Fondazione Cariparma. SG was supported through the Simons foundation
collaboration grant 526947. CW gratefully acknowledges the support of the Natural Sciences 
and Engineering Research Council of Canada (NSERC), provided via the Discovery Grants Program 
(Grant RGPIN-2022-03298 and DGECR-2022-00440).

\section{Background on Yangians and $R$--matrices}\label{sec:YR}

\subsection{}

Let $\g$ be a \fd complex semisimple Lie algebra and $(\cdot,\cdot)$ an
invariant symmetric non--degenerate bilinear form on $\g$. Let $\h
\subset\g$ be a Cartan subalgebra of $\g$, $\{\alpha_i\}_{i\in\bfI}\subset
\h^*$ a basis of simple roots of $\g$ relative to $\h$ and $a_{ij}=2(\alpha
_i,\alpha_j)/(\alpha_i,\alpha_i)$ the entries of the corresponding Cartan
matrix $\bfA$. Let $\Phi_+\subset \h^*$ be the corresponding set of
positive roots, and $\mathsf{Q}=\Z\Phi_+=\bigoplus_{i\in \bfI}\Z\alpha_i\subset
\h^*$ the root lattice. The semigroup $\Z_{\geq 0}\Phi_+ \subset \mathsf{Q}$
is denoted by $\mathsf{Q}_+$. We assume that $(\cdot,\cdot)$ is normalised so
that the square length of short roots is $2$. Set $d_i=(\alpha_i,\alpha_i)
/2\in\{1,2,3\}$, so that $d_ia_{ij}=d_j a_{ji}$ for any $i,j\in\bfI$.
In addition, we set $h_i=\nu^{-1}(\alpha_i)/d_i$ and choose root vectors
$x_i^\pm \in \g_{\pm \alpha_i}$ such that $[x_i^+,x_i^-]=d_ih_i$, where
$\nu:\h\to\h^*$ is the isomorphism determined by $(\cdot,\cdot)$.

\subsection{The Yangian $\Yhg$ \cite{drinfeld-yangian-qaffine}}\label{ssec: yangian}

Let $\hbar\in\nC$. The Yangian $\Yhg$ is the unital, associative $\C$--algebra
generated by elements $\{x^{\pm}_{i,r},\xi_{i,r}\}_{i\in\bfI,
r\in\N}$, subject to the following relations:
\begin{enumerate}[font=\upshape, label=(Y\arabic*)]\itemsep0.25cm
\item\label{Y1} For any $i,j\in\bfI$, $r,s\in\N$: $[\xi_{i,r}, \xi_{j,s}] = 0$.

\item\label{Y2} For $i,j\in\bfI$ and $s\in \N$: $
[\xi_{i,0}, x_{j,s}^{\pm}] = \pm d_ia_{ij} x_{j,s}^{\pm}$.

\item\label{Y3} For $i,j\in\bfI$ and $r,s\in\N$:
\[[\xi_{i,r+1}, x^{\pm}_{j,s}] - [\xi_{i,r},x^{\pm}_{j,s+1}] =
\pm\hbar\frac{d_ia_{ij}}{2}(\xi_{i,r}x^{\pm}_{j,s} + x^{\pm}_{j,s}\xi_{i,r})\ .
\]

\item\label{Y4} For $i,j\in\bfI$ and $r,s\in \N$:
\[
[x^{\pm}_{i,r+1}, x^{\pm}_{j,s}] - [x^{\pm}_{i,r},x^{\pm}_{j,s+1}]=
\pm\hbar\frac{d_ia_{ij}}{2}(x^{\pm}_{i,r}x^{\pm}_{j,s} 
+ x^{\pm}_{j,s}x^{\pm}_{i,r})\ .
\]
\item\label{Y5} For $i,j\in\bfI$ and $r,s\in \N$:
$[x^+_{i,r}, x^-_{j,s}] = \delta_{ij} \xi_{i,r+s}$.

\item\label{Y6} Let $i\not= j\in\bfI$ and set $m = 1-a_{ij}$. For any
$r_1,\cdots, r_m, s\in \N$:
\[
\sum_{\pi\in\Sym_m}
\left[x^{\pm}_{i,r_{\pi(1)}},\left[x^{\pm}_{i,r_{\pi(2)}},\left[\cdots,
\left[x^{\pm}_{i,r_{\pi(m)}},x^{\pm}_{j,s}\right]\cdots\right]\right]\right.=0.
\]
\end{enumerate}

We denote by $\Ynaughthg$ and $\Ypmhg$ the unital subalgebras of
$\Yhg$ generated by $\{\xi_{i,r}\}_{i\in\bfI, r\in\Z_{\geq 0}}$
and $\{x_{i,r}^{\pm}\}_{i\in\bfI, r\in\Z_{\geq 0}}$, respectively.
Let $Y^{\geq}_\hbar(\g)$ (resp. $Y^{\leq}_\hbar(\g)$) denote the
subalgebras of $\Yhg$ generated by $\Ynaughthg$ and $Y^+_\hbar(\g)$
(resp. $\Ynaughthg$ and $Y^-_\hbar(\g)$).

\subsection{Shift automorphism}\label{ssec: shift-yangian}

The group of translations of the complex plane acts on
$\Yhg$ by
\[\tau_a(y_r) = \sum_{s=0}^r
\left(\begin{array}{c}r\\s\end{array}\right)
a^{r-s}y_s\]
where $a\in\C$ and $y$ is one of $\xi_i,x_i^\pm$.

\subsection{}\label{ssec:ti1}

For each $i\in \bfI$, define $t_{i,1}\in \Ynaughthg$ by the formula
\begin{equation}\label{eq:tione}
t_{i,1}:=\xi_{i,1} -\frac{\hbar}{2} \xi_{i,0}^2\ .
\end{equation}
The relations \ref{Y2}--\ref{Y3} of $\Yhg$ imply that
for any $i,j\in\bfI$ and $r\in\N$, 
\begin{equation}\label{eq:h1-zero}
[t_{i,1},x_{j,r}^{\pm}] = \pm d_ia_{ij}x_{j,r+1}^{\pm}.
\end{equation}
Hence, the elements $t_{i,1}$ act as shift operators on the generators
$x_{j,r}^\pm$.

\subsection{Two embeddings $\h\to\Ynaughthg$}\label{ssec:T}

By the Poincar\'{e}--Birkhoff--Witt theorem for $\Yhg$ \cite{levendorskii-PBW} (see also 
\cite[Thm. B.6]{FiTsWe19} and
\cite[Prop. 2.2]{GRWEquiv}), there is
an embedding of $U(\g)$ into $\Yhg$, uniquely determined by
\[
x_i^\pm \mapsto x_{i,0}^\pm
\aand
d_ih_i\mapsto \xi_{i,0}\ 
\]
for each $i\in \bfI$. We shall henceforth identify $U(\g)\subset \Yhg$, with the 
above embedding implicitly understood.
Viewed as a module over $\h\subset \Yhg$, we then have
$\Yhg = \bigoplus_{\beta\in \mathsf{Q}}\Yhg_{\beta}$, where
\[
\Yhg_\beta = \{y\in\Yhg : [h,y]=\beta(h)y,\ \forall\ 
h\in\h\}\ .
\]

A second embedding $\Top:\h\to\Yhg$ is given by setting $\Top(d_ih_i)
=t_{i,1}$ for all $i\in \bfI$, where $t_{i,1}$ is defined by \eqref{eq:tione}.
Using this embedding, \eqref{eq:h1-zero} can be written as
\begin{equation}\label{eq:T}
[\Top(h),x_{i,r}^{\pm}] = \pm \alpha_i(h)x_{i,r+1}^{\pm}\quad \forall\;
h\in\h\ .
\end{equation}

\subsection{Coproduct}\label{ssec:delta}

We now recall the definition of the standard coproduct $\Delta$ on $\Yhg$. Set
\begin{equation}\label{eq:sfr}
\sfr=\hbar\sum_{\beta\in\Phi_+} x^-_{\beta,0}\otimes x^+_{\beta,0},
\end{equation}
where $x^{\pm}_{\beta,0}\in \g_{\pm\beta}\subset\Yhg$ are root vectors
such that $(x^-_{\beta,0},x^+_{\beta,0})=1$. For any $h\in\h\subset\Yhg$,
define
\begin{equation}\label{eq:sfrh}
\sfr(h) := \ad(h\otimes 1)\cdot \sfr = -\hbar\sum_{\beta\in\Phi_+}
\beta(h)x^-_{\beta,0}\otimes x^+_{\beta,0}\ .
\end{equation}
The  coproduct $\Delta:
\Yhg\to\Yhg\otimes\Yhg$ is then uniquely determined by the following formulae, for  $i\in \bfI$ and $h\in \h$:
\begin{gather*}
\Delta(\xi_{i,0}) = \xi_{i,0}\otimes 1  + 1\otimes \xi_{i,0}, 
\quad 
\Delta(x_{i,0}^{\pm}) 
= x_{i,0}^{\pm}\otimes 1    + 1\otimes x_{i,0}^{\pm},\\[3pt]
\Delta(\Top(h))
= 
\Top(h)\otimes 1   + 1\otimes \Top(h) + \sfr(h).
\end{gather*}
We refer the reader to \cite[\S 4.2]{guay-nakajima-wendlandt} for a
proof that $\Delta$ is an algebra homomorphism. It is immediate
that $\Delta$ is coassociative (see \cite[\S 4.5]{guay-nakajima-wendlandt}).

\subsection{Drinfeld's universal $R$--matrix}\label{ssec:R}

Let $\Delta_s := (\tau_s\otimes 1)\circ \Delta$ and
$\Delta^{\op}_s := (\tau_s\otimes 1)\circ \Delta^{\op}$. Viewing
$\tau_s$ as an algebra homomorphism $\tau_s : \Yhg \to \Yhg[s]$,
Drinfeld \cite[Thm. 3]{drinfeld-qybe} showed that there is a unique $\mathcal{R}(s)\in
\Pseries{\lp\Yhg\otimes\Yhg\rp}{s^{-1}}$ satisfying the following
three conditions:
\begin{enumerate}\itemsep0.25cm
\item $\mathcal{R}(s) = 1\otimes 1 + X(s)$, where
$X(s)\in s^{-1}\Pseries{\Yhg^{\otimes 2}}{s^{-1}}$.
\item For every $a\in \Yhg$, we have
\[
\Delta_s^{\op}(a) = \RR(s) \Delta_s(a) \RR(s)^{-1}.
\]
\item The following cabling identities hold:
\begin{gather*}
\Delta\otimes 1 (\RR(s)) = \RR_{13}(s)\RR_{23}(s)\ ,\\
1\otimes \Delta (\RR(s)) = \RR_{13}(s)\RR_{12}(s)\ .
\end{gather*}
\end{enumerate}
%
\subsection{Intertwining equation}\label{ssec:IE}
As indicated above, a constructive proof of the existence of $\RR(s)$ was recently given in \cite{GTLW} by reassembling it from the components 
in its Gaussian decomposition 
\[
\RR(s) = \RR^+(s)\RR^0(s)\RR^-(s).
\]
The lower triangular part $\RR^-(s)$ is the main object of
study in this paper.
Let us recall its defining properties, following \cite[Thm. 4.1]{GTLW}.
Namely, $\RR^-(s)$ is the unique, zero weight element
of $\Pseries{\lp Y^{\leq}_\hbar(\g)\otimes Y^{\geq}_\hbar(\g)\rp}{s^{-1}}$
satisfying the following two conditions:

\begin{enumerate}\itemsep0.25cm
\item Write $\RR^-(s) = \sum_{\gamma\in\mathsf{Q}_+}\RR^-_\gamma(s)$,
where
\[
\RR^-_\gamma(s)\in \Pseries{\lp Y^{\leq}_\hbar(\g)_{-\gamma}\otimes
Y^{\geq}_\hbar(g)_\gamma\rp}{s^{-1}}.
\]
Then $\RR^-_0(s) = 1\otimes 1$.

\item The following {\em intertwining equation} holds, for every $h\in\h$:
\begin{equation}\label{eq:IE}
[\Top(h)\otimes 1 + 1\otimes \Top(h) + s h\otimes 1,
\RR^-(s)] = \RR^-(s)\sfr(h)\ .
\end{equation}
\end{enumerate}
Here we note that \eqref{eq:IE} is equivalent to the relation (4.1) in \cite{GTLW}, which is written in terms of the deformed Drinfeld coproduct on $\Yhg$ and $\Delta_s$ from Section \ref{ssec:R} above. 
Setting $\mathcal{D}(h;s) = 
\ad(\Top(h)\otimes 1 + 1\otimes \Top(h) + s h\otimes 1)$, it may also be written in terms of the elements $\RR^-_\gamma(s)$ as
\begin{equation}\label{eq:IE-block}
\mathcal{D}(h;s)\cdot \RR^-_\gamma(s) = -\hbar\sum_{
\begin{subarray}{c} \alpha\in \Phi_+\  \\
\gamma-\alpha\in \mathsf{Q}_+
\end{subarray}}
\RR^-_{\gamma-\alpha}(s) x^-_{\alpha,0}\otimes x^+_{\alpha,0}\ .
\end{equation}
Together with the initial condition $\RR^-_0(s)=1\otimes 1$, 
this equation defines $\RR^-_\beta(s)$ inductively on the height of
$\beta$; see \cite[Eqn.~(4.6)]{GTLW}. The sum
$\sum_{\beta\in\mathsf{Q}_+} \RR^-_\beta(s)$ is a well--defined
element of $\Pseries{\Yhg^{\otimes 2}}{s^{-1}}$
which solves \eqref{eq:IE} and, by Part (2) of \cite[Thm. 4.1]{GTLW}, lies in
$\Pseries{\lp Y^-_\hbar(\g)\otimes Y^+_\hbar(\g)\rp}{s^{-1}}$.

\section{Non-existence of $\RR^-(s)$ factorizations}\label{sec:impossible}

The aim of this section is to show that the unique solution
$\RR^-(s)$ of \eqref{eq:IE} does not admit any factorization
over the set of positive roots $\Phi_+$. 
From now onwards, we assume that $\mathrm{rank}(\g)>1$.

\subsection{Notation}\label{ssec:not}
For a given total order $<$ on $\Phi_+$ and a collection of
elements $\{A^{(\alpha)}\}_{\alpha\in\Phi_+}$ lying in some associative
algebra, we set
\[
\prod_{\alpha\in\Phi_+}^< A^{(\alpha)} := A^{(\beta_1)}\cdots A^{(\beta_N)}\ ,
\]
where $\Phi_+ = \{\beta_1<\ldots < \beta_N\}$.
In addition, for each $\gamma\in\mathsf{Q}_+$ we let $\mathcal{P}(\gamma)$ denote
the set of partitions of $\gamma$ as a sum of positive roots:
\begin{equation*}
\mathcal{P}(\gamma) = \left\{ \ul{k}=(k_{\alpha})_{\alpha\in\Phi_+}
\in\Z_{\geq 0}^{\Phi_+} : \gamma = \sum_{\alpha\in\Phi_+}
k_\alpha \alpha\right\}.
\end{equation*}
We further record the following symbolic identity for reference later, where $\{X^{\alpha}_n : \alpha\in\Phi_+, n\in\N\}$ is an arbitrary
collection of non--commuting variables:
\begin{equation}\label{eq:symb}
\prod_{\alpha\in\Phi_+}^< \lp \sum_{n=0}^\infty X^{(\alpha)}_n \rp
= \sum_{\gamma\in\mathsf{Q}_+} \lp
\sum_{\ul{k}\in\mathcal{P}(\gamma)} \prod_{\alpha\in\Phi_+}^<
X^{(\alpha)}_{k_\alpha} \rp.
\end{equation}
\subsection{Total order}\label{ssec:order}
Fix a total order $\prec$ on $\Phi_+$ satisfying the
following condition: there exist two simple roots $\alpha_i,\alpha_j$
such that
\begin{itemize}\itemsep0.25cm
\item $\alpha_i+\alpha_j\in \Phi_+$ and $\alpha_i+\ell\alpha_j\not\in\Phi_+$
for every $\ell\in\Z_{\geq 2}$.
\item $\alpha_i \prec \alpha_i+\alpha_j \prec \alpha_j$.
\end{itemize}

\begin{rem}
Recall that a total order $<$ on $\Phi_+$ is said to be {\em convex}
(or {\em normal})
if for every $\alpha,\beta\in\Phi_+$ such that $\gamma=\alpha + \beta\in\Phi_+$,
either $\alpha<\gamma<\beta$, or $\beta<\gamma<\alpha$.
Thus, the order $\prec$ considered above could be any
convex ordering, if the root system is not $\mathsf{B}_2$
or $\mathsf{G}_2$. In the $\mathsf{B}_2,\mathsf{G}_2$ cases,
we can take, for instance, the following convex order,
where $\alpha_1$ is the short simple root:
\begin{itemize}
\item[($\mathsf{B}_2$)]
$\alpha_1\prec 2\alpha_1+\alpha_2\prec \alpha_1+\alpha_2\prec \alpha_2$.
\item[($\mathsf{G}_2$)]
$\alpha_1\prec 3\alpha_1+\alpha_2 \prec 2\alpha_1+\alpha_2
\prec 3\alpha_1+2\alpha_2\prec \alpha_1+\alpha_2\prec \alpha_2$.
\end{itemize}
\end{rem}

\subsection{Block elements}\label{ssec:block}

Assume that we are given an arbitrary collection 
$\{F^{(\alpha)}_n(s) : \alpha\in\Phi_+, n\in\N\}$, where
\[
F^{(\alpha)}_n(s) \in \Pseries{\lp Y^{\leq}_\hbar(\g)_{-n\alpha}
\otimes Y^{\geq}_\hbar(\g)_{n\alpha}\rp}{s^{-1}}
\]
and $F^{(\alpha)}_0(s) = 1\otimes 1$. In accordance with
\eqref{eq:symb}, define:
\begin{equation}\label{eq:partitions}
F_\gamma(s) = \sum_{\ul{k}\in\mathcal{P}(\gamma)}
\prod_{\alpha\in\Phi^+}^\prec
F^{(\alpha)}_{k_\alpha}(s)\ , \qquad
F_\gamma(s)\in\Pseries{\lp Y^{\leq}_\hbar(\g)_{-\gamma}
\otimes Y^{\geq}_\hbar(\g)_\gamma\rp}{s^{-1}}.
\end{equation}

\subsection{Main theorem}\label{ssec:impossible}

To state our main theorem, let us introduce some auxiliary terminology. Let $\mathcal{J}(s)$ be an arbitrary weight zero element of  $\Pseries{(Y^{\leq}_\hbar(\g)
\otimes Y^{\geq}_\hbar(\g))}{s^{-1}}$, and write $\mathcal{J}(s)=\sum_{\gamma\in\mathsf{Q}_+}\mathcal{J}_\gamma(s)
$ with 
\begin{equation*}
\mathcal{J}_\gamma(s)\in (Y^{\leq}_\hbar(\g)_{-\gamma}\otimes
Y^{\geq}_\hbar(\g)_\gamma)[\![s^{-1}]
\!].
\end{equation*}
 Then, for a fixed $\alpha\in \Phi_+$, we say that $\mathcal{J}(s)$ has $\Z\alpha$-\textit{support} if $\mathcal{J}_\gamma(s)=0$ for $\gamma\notin \Z\alpha$ and, in addition, $\mathcal{J}_0(s)=1\otimes 1$.

\begin{thm}\label{thm:impossible}
For any total ordering $\prec$ as in \S \ref{ssec:order}, and 
$\{F^{(\alpha)}_n(s)\}_{\alpha\in\Phi_+,n\in\N}$
as in \S \ref{ssec:block}, the elements 
$\{F_\gamma(s)\}_{\gamma\in\mathsf{Q}_+}$
do not satisfy
the intertwining equation \eqref{eq:IE-block}. Consequently, $\RR^-(s)$ does not admit a factorization of the form 
\begin{equation*}
\RR^-(s)=\prod_{\alpha\in\Phi_+}^\prec  \mathcal{J}^{(\alpha)}(s),
\end{equation*}
where, for each $\alpha\in \Phi_+$, $\mathcal{J}^{(\alpha)}(s)$ is a weight zero element of $\Pseries{(Y^{\leq}_\hbar(\g)
\otimes Y^{\geq}_\hbar(\g))}{s^{-1}}$ with $\Z\alpha$-support.
\end{thm}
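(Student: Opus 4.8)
The plan is to assume, for contradiction, that the block elements $\{F_\gamma(s)\}$ defined by \eqref{eq:partitions} satisfy \eqref{eq:IE-block}, and to extract a contradiction by comparing coefficients at a carefully chosen weight $\gamma$. The natural candidate is $\gamma=\alpha_i+\alpha_j$, where $\alpha_i,\alpha_j$ are the two simple roots singled out in \S\ref{ssec:order}. First I would work out $\mathcal{P}(\alpha_i+\alpha_j)$ explicitly: under the hypothesis $\alpha_i+\ell\alpha_j\notin\Phi_+$ for $\ell\geq 2$, the only partitions of $\alpha_i+\alpha_j$ into positive roots are $\{\alpha_i,\alpha_j\}$ (each with multiplicity one) and the singleton $\{\alpha_i+\alpha_j\}$ itself. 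Hence, using $F^{(\alpha)}_0(s)=1\otimes 1$ and the ordering $\alpha_i\prec\alpha_i+\alpha_j\prec\alpha_j$, formula \eqref{eq:partitions} gives
\begin{equation*}
F_{\alpha_i+\alpha_j}(s) = F^{(\alpha_i)}_1(s)\cdot(1\otimes 1)\cdot(1\otimes 1) + (1\otimes 1)\cdot F^{(\alpha_i+\alpha_j)}_1(s)\cdot(1\otimes 1) + (1\otimes 1)\cdot(1\otimes 1)\cdot F^{(\alpha_j)}_1(s),
\end{equation*}
so $F_{\alpha_i+\alpha_j}(s)=F^{(\alpha_i)}_1(s)+F^{(\alpha_i+\alpha_j)}_1(s)+F^{(\alpha_j)}_1(s)$, and similarly $F_{\alpha_i}(s)=F^{(\alpha_i)}_1(s)$, $F_{\alpha_j}(s)=F^{(\alpha_j)}_1(s)$.

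Next I would set up the two relevant instances of \eqref{eq:IE-block}. At height one, for a simple root $\alpha_k$, the equation reads $\mathcal{D}(h;s)\cdot F^{(\alpha_k)}_1(s)=-\hbar\, x^-_{\alpha_k,0}\otimes x^+_{\alpha_k,0}$, which is precisely the $\sl_2$-type equation and pins down each $F^{(\alpha_k)}_1(s)$ uniquely (this also matches the known closed form for $\RR^-$ in rank one). At height two with $\gamma=\alpha_i+\alpha_j$, since $\alpha_i+\alpha_j$ is the unique positive root $\alpha$ with $\gamma-\alpha=\alpha_j$ or $\alpha_i$, and $\gamma-(\alpha_i+\alpha_j)=0$, the right-hand side of \eqref{eq:IE-block} becomes
\begin{equation*}
-\hbar\lp F^{(\alpha_i)}_1(s)\, x^-_{\alpha_j,0}\otimes x^+_{\alpha_j,0} + F^{(\alpha_j)}_1(s)\, x^-_{\alpha_i,0}\otimes x^+_{\alpha_i,0} + x^-_{\alpha_i+\alpha_j,0}\otimes x^+_{\alpha_i+\alpha_j,0}\rp.
\end{equation*}
Applying $\mathcal{D}(h;s)$ to the expression for $F_{\alpha_i+\alpha_j}(s)$ above and using the height-one equations to rewrite $\mathcal{D}(h;s)\cdot F^{(\alpha_i)}_1(s)$ and $\mathcal{D}(h;s)\cdot F^{(\alpha_j)}_1(s)$, the required identity collapses to a single constraint purely on $F^{(\alpha_i+\alpha_j)}_1(s)$: namely $\mathcal{D}(h;s)\cdot F^{(\alpha_i+\alpha_j)}_1(s)$ must equal $-\hbar\, x^-_{\alpha_i+\alpha_j,0}\otimes x^+_{\alpha_i+\alpha_j,0}$ plus cross terms, and since $F^{(\alpha_i+\alpha_j)}_1(s)$ is forced to have weight $(-(\alpha_i+\alpha_j),\alpha_i+\alpha_j)$ and satisfies $F^{(\alpha_i+\alpha_j)}_1(s)\in\Pseries{\cdots}{s^{-1}}$ with no lower-order freedom, I can solve for it order by order in $s^{-1}$.

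The main obstacle — and the crux of the argument — is showing that this forced $F^{(\alpha_i+\alpha_j)}_1(s)$ is inconsistent, i.e. that the constraint has no solution with $\Z(\alpha_i+\alpha_j)$-support. Concretely, $F^{(\alpha_i+\alpha_j)}_1(s)$ lies in $(Y^{\leq}_\hbar(\g)_{-(\alpha_i+\alpha_j)}\otimes Y^{\geq}_\hbar(\g)_{\alpha_i+\alpha_j})[\![s^{-1}]\!]$, so its leading term is some combination of $x^-_{\alpha_i+\alpha_j,0}\otimes x^+_{\alpha_i+\alpha_j,0}$ and $[x^-_{\alpha_i,0},x^-_{\alpha_j,0}]\otimes(\cdots)$ type tensors; I would compute the action of $\mathcal{D}(h;s)$ on each such tensor (using \eqref{eq:T}, which turns $\Top(h)$ into a shift on the $r$-index, together with the $\sfr(h)$ piece hidden in $\mathcal{D}$) and match against the target. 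The key point is that the target right-hand side contains cross terms like $F^{(\alpha_i)}_1(s)\,x^-_{\alpha_j,0}\otimes x^+_{\alpha_j,0}$ whose expansion in $Y^{\leq}_\hbar\otimes Y^{\geq}_\hbar$ produces, in the first tensor factor, products $x^-_{\alpha_i,r}x^-_{\alpha_j,0}$ in a \emph{fixed} order dictated by $\prec$ (namely $\alpha_i$ before $\alpha_j$) — whereas any genuine solution of \eqref{eq:IE-block} must be symmetric under the exchange $\alpha_i\leftrightarrow\alpha_j$ in a way the ordered ansatz cannot reproduce once $\mathrm{rank}(\g)>1$. Making this asymmetry precise is where the real work lies: I expect to identify one specific coefficient (say, the coefficient of $s^{-2}$, or of a particular PBW monomial such as $x^-_{\alpha_i,1}x^-_{\alpha_j,0}\otimes x^+_{\alpha_i,0}x^+_{\alpha_j,0}$) on both sides, show the left-hand side is forced to vanish or take one value while the right-hand side takes a different nonzero value, and thereby obtain the contradiction. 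The final sentence of the theorem then follows immediately: if $\RR^-(s)=\prod^\prec_\alpha \mathcal{J}^{(\alpha)}(s)$ with each $\mathcal{J}^{(\alpha)}(s)$ of $\Z\alpha$-support, then writing $\mathcal{J}^{(\alpha)}(s)=\sum_{n}F^{(\alpha)}_n(s)$ with $F^{(\alpha)}_n(s)=\mathcal{J}^{(\alpha)}_{n\alpha}(s)$ exhibits $\RR^-(s)$ in the form $\sum_\gamma F_\gamma(s)$ of \eqref{eq:partitions}, contradicting the uniqueness of the solution to \eqref{eq:IE-block} together with the fact just proved that no such $\{F_\gamma(s)\}$ solves it.
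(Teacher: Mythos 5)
There are two problems with your plan, one computational and one structural. First, your expansion of $F_{\alpha_i+\alpha_j}(s)$ from \eqref{eq:partitions} is incorrect: the partition $\ul{k}$ with $k_{\alpha_i}=k_{\alpha_j}=1$ contributes the ordered \emph{product} $F^{(\alpha_i)}_1(s)\,F^{(\alpha_j)}_1(s)$ (with $F^{(\alpha_{ij})}_0(s)=1\otimes 1$ sitting in the middle), not the sum of the two factors, so the correct identity is $F_{\alpha_i+\alpha_j}(s)=F^{(\alpha_i)}_1(s)F^{(\alpha_j)}_1(s)+F^{(\alpha_{ij})}_1(s)$; your three-term sum does not even lie in the weight space $Y^{\leq}_\hbar(\g)_{-(\alpha_i+\alpha_j)}\otimes Y^{\geq}_\hbar(\g)_{\alpha_i+\alpha_j}$. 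Second, and more seriously, even with this corrected, no contradiction can be extracted at the single weight $\gamma=\alpha_i+\alpha_j$. Using that $\mathcal{D}(h;s)$ is a derivation and the height-one equations \eqref{eq:nk}, the height-two instance of \eqref{eq:IE-block} reduces to
\begin{equation*}
\mathcal{D}(h;s)\cdot F^{(\alpha_{ij})}_1(s)\;=\;\sfr_{ij}(h)+\bigl[F^{(\alpha_j)}_1(s),\sfr_i(h)\bigr],
\end{equation*}
and this \emph{does} admit a solution of the required weight and support: since the height-one equations force $F^{(\alpha_i)}_1(s)=\RR^-_{\alpha_i}(s)$ and $F^{(\alpha_j)}_1(s)=\RR^-_{\alpha_j}(s)$, the element $F^{(\alpha_{ij})}_1(s):=\RR^-_{\alpha_{ij}}(s)-F^{(\alpha_i)}_1(s)F^{(\alpha_j)}_1(s)$ satisfies it. In other words, the ordering asymmetry you hope to exploit at height two is entirely absorbed by the still-undetermined factor $F^{(\alpha_{ij})}_1(s)$, and the vague appeal to an $\alpha_i\leftrightarrow\alpha_j$ symmetry of the true solution is not a workable mechanism (no such symmetry is available). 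So the coefficient comparison you propose would not close the argument.

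The missing idea is that one must go up to the weights $\gamma=\alpha_i+\ell\alpha_j$ with $\ell\geq 2$, where \eqref{eq:partitions} gives $F_{\alpha_i+\ell\alpha_j}(s)=F^{(\alpha_{ij})}_1(s)F^{(\alpha_j)}_{\ell-1}(s)+F^{(\alpha_i)}_1(s)F^{(\alpha_j)}_{\ell}(s)$ and no freedom remains. This is what the paper does: combining this with \eqref{eq:IE-block}, specializing to $h\in\alpha_i^{\perp}$ so that $\sfr_i(h)=0$, and using the $\ell=1$ case to get $\mathcal{D}(h;s)\cdot F^{(\alpha_{ij})}_1(s)=\sfr_{ij}(h)$, one feeds this back at general $\ell$ to obtain the commutation constraint $[x^-_{ij,0}\otimes x^+_{ij,0},\,F^{(\alpha_j)}_n(s)]=0$ for all $n$. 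The contradiction then comes from the $s^{-2}$ coefficient of $F^{(\alpha_j)}_1(s)$ (your height-one computation, as in \eqref{eq:1-jet}) together with an explicit rank-two computation in $\Yhg$, namely \eqref{eq:contra2}, which shows that $[x^-_{ij,0}\otimes x^+_{ij,0},\,x^-_{j,0}\otimes x^+_{j,1}-x^-_{j,1}\otimes x^+_{j,0}]$ is a nonzero multiple of $x^-_{ij,0}x^-_{j,0}\otimes x^+_{ij,0}x^+_{j,0}$. Your final reduction (a factorization by elements with $\Z\alpha$-support yields, via $F^{(\alpha)}_n(s)=\mathcal{J}^{(\alpha)}_{n\alpha}(s)$, a family $\{F_\gamma(s)\}$ solving \eqref{eq:IE-block}, which the first part forbids) is fine and matches the paper.
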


\begin{proof}[{\sc Proof}]
The proof is by contradiction, which we split into three
elementary steps whose details are worked out in 
\S \ref{ssec:pf-simple}--\ref{ssec:pf-rk2} below.
The struture of our argument is as follows.
Assume that \eqref{eq:IE-block} holds for $\{F_\gamma(s)\}$.
\begin{enumerate}\setlength{\itemsep}{5pt}
\item For each simple root $\alpha_k$, 
$\{F^{(\alpha_k)}_n(s)\}_{n\in\N}$ can be explicitly
computed, as in rank $1$ case. We only need the first
two terms of $F^{(\alpha_k)}_1(s)$, which are obtained
in \S \ref{ssec:pf-simple}.

\item In \S \ref{ssec:pf-comm} we show that
$F^{(\alpha_j)}_n(s)$ commutes with $x^-_{ij,0}\otimes
x^+_{ij,0}$. Here $\alpha_i,\alpha_j$ are the simple
roots satisfying conditions imposed on $\prec$ in
\S \ref{ssec:order} above, and $x_{ij,0}^\pm$
are the root vectors corresponding to $\alpha_i+\alpha_j$.

\item A simple rank $2$ computation is then carried out
in \S \ref{ssec:pf-rk2} to show that $x^-_{ij,0}\otimes
x^+_{ij,0}$ does not commute with $F^{(\alpha_j)}_1(s)$,
thus obtaining the desired contradiction. \qedhere
\end{enumerate}
\end{proof}

\begin{rem}
In the setup of \S \ref{ssec:block}, we
did not assume that $\sum_{\gamma\in\mathsf{Q}_+} F_\gamma(s)$
exists as an element of $\Pseries{\Yhg^{\otimes 2}}{s^{-1}}$.
This, in fact, is a consequence of \eqref{eq:IE-block},
as observed in \cite[Eqns. (4.6)--(4.7)]{GTLW}. Namely, if
$\{A_\gamma(s)\}_{\gamma\in\mathsf{Q}_+}$ solve
\eqref{eq:IE-block} and $A_0(s)=1\otimes 1$, 
then $A_\gamma(s)$ is divisible by $s^{-\nu(\gamma)}$, where
\begin{equation*}
\nu(\gamma) = \mathrm{min}\left\{ k\in\Z_{\geq 0}\ |\ 
\gamma = \beta_1+\cdots+\beta_k,\text{ where } \beta_1,\ldots,\beta_k\in\Phi_+ \right\}.
\end{equation*}
For an interpretation of this fact in terms of dual bases and the Yangian double, we refer the reader to Corollary 9.9 of \cite{Curtis-DYg}.
\end{rem}

\subsection{Simple roots}\label{ssec:pf-simple}
Let $\alpha_k\in\Phi_+$ be a simple root. Then the defining equation \eqref{eq:partitions}
for $F_\gamma(s)$ implies that $F_{n\alpha_k}(s) = F^{(\alpha_k)}_n(s)$, for every
$n\in\N$. The
intertwining equation \eqref{eq:IE-block} with $\gamma=n\alpha_k$
then becomes:
\begin{equation}\label{eq:nk}
\mathcal{D}(h;s)\cdot F^{(\alpha_k)}_n(s) = 
-\hbar\alpha_k(h) F^{(\alpha_k)}_{n-1}(s) x^-_{k,0}\otimes x^+_{k,0}.
\end{equation}
\begin{rem}
We would like to point out that this equation is precisely
the one defining $\RR^-(s)$ for $\sl_2$ whose explicit formula
is given in \cite[Thm. 5.5]{GTLW}. For the purposes of our
proof, it is enough to know the coefficient of $s^{-2}$ in 
$F^{(\alpha_k)}_1(s)$. We include this easy computation below, 
for completeness.
\end{rem}
Using $F^{(\alpha_k)}_0(s)=1\otimes 1$, the $n=1$ case
of equation \eqref{eq:nk} is the following:
\[
[\Top(h)\otimes 1 + 1\otimes \Top(h) + s h\otimes 1,
F^{(\alpha_k)}_1(s)] = -\hbar\alpha_k(h) x^-_{k,0}\otimes x^+_{k,0}.
\]

Comparing coefficients of $s^0$ and $s^{-1}$, and using
the commutation relation \eqref{eq:T}, we obtain
\begin{equation}\label{eq:1-jet}
F^{(\alpha_k)}_1(s) = \hbar s^{-1} \lp x^-_{k,0}\otimes x^+_{k,0}
 + (-x^-_{k,1}\otimes x^+_{k,0} + x^-_{k,0}\otimes x^+_{k,1})s^{-1}
+ \ldots \rp.
\end{equation}
In fact, though not needed in the present article, it follows easily from \eqref{eq:T} and the above relation for $F^{(\alpha_k)}_1(s)$ that 
\begin{equation*}
F^{(\alpha_k)}_1(s)=\sum_{n\geq 0} x_{k,n}^-\otimes \partial_s^{(n)} x_k^+(s),
\end{equation*}
where $x_k^+(s)=\hbar\sum_{r\geq 0} x_{k,r}^+ s^{-r-1}$ and $\partial_s^{(n)}=\frac{1}{n!}\partial_s^{n}$, with $\partial_s$ the partial derivative operator with respect to $s$. This formula may be found in the proof \cite[Prop.~7.1]{Curtis-DYg}, above Remark 7.2 therein. 

\subsection{Commutativity relation}\label{ssec:pf-comm}
Now, let $\alpha_i,\alpha_j\in\Phi_+$ be the simple roots for which
the condition imposed in \S \ref{ssec:order} holds. For notational
convenience, we will write $\alpha_{ij} = \alpha_i+\alpha_j\in\Phi_+$.
Similarly, $x_{ij,0}^{\pm}$ will denote the root vectors corresponding
to the positive root $\alpha_{ij}$. We will also abbreviate
\[
\sfr_a(h) = -\hbar\alpha_a(h)x^-_{a,0}\otimes x^+_{a,0}, \text{ for }a=i,j
\text{ or } a=ij.
\]

Equations \eqref{eq:IE-block}
and \eqref{eq:partitions} for $\gamma=\alpha_i+\ell\alpha_j\in\mathsf{Q}_+$
take the following form:
\begin{gather}\label{eq:i+lj-1}
\mathcal{D}(h;s)\cdot F_{\alpha_i+\ell\alpha_j}(s)
 = F_{\ell\alpha_j}(s)\sfr_i(h) + F_{(\ell-1)\alpha_j} \sfr_{ij}(h)
+ F_{\alpha_i+(\ell-1)\alpha_j}(s) \sfr_j(h),\\[3pt]
\label{eq:i+lj-2}
F_{\alpha_i+\ell\alpha_j}(s) = F^{(\alpha_{ij})}_1(s)F^{(\alpha_j)}_{\ell-1}(s)
+ F^{(\alpha_i)}_1(s)F^{(\alpha_j)}_\ell(s).
\end{gather}

Combining these two equations,
using the fact that $\mathcal{D}(h;s)$ is a derivation, and
\eqref{eq:nk} for $n=1$ above, we get:
\begin{equation*}
\lp\mathcal{D}(h;s)\cdot F^{(\alpha_{ij})}_1(s)\rp
F^{(\alpha_j)}_{\ell-1}(s) = 
F^{(\alpha_j)}_{\ell-1}(s)\sfr_{ij}(h) + 
\left[
F^{(\alpha_j)}_{\ell}(s),\sfr_i(h)
\right].
\end{equation*}

Now choose $h\in\alpha_i^\perp \subset \h$ so that $\sfr_i(h)=0$,
and take $\ell=1$ to get $\mathcal{D}(h;s)\cdot F^{(\alpha_{ij})}_1(s)
 = \sfr_{ij}(h)$. Substitute this back into the equation above
to obtain:
\[
\left[
\sfr_{ij}(h), F^{(\alpha_j)}_{\ell-1}(s)
\right] = 0,\ \forall\  h\in\alpha_i^\perp,\ \ell\in\Z_{\geq 1}.
\]

Take $h\in\alpha_i^\perp$ such that $\alpha_j(h)\neq 0$ and therefore,
$\sfr_{ij}(h)$ is a non--zero scalar multiple of $x_{ij,0}^-\otimes
x_{ij,0}^+$. We get:
\[
\left[x_{ij,0}^-\otimes x_{ij,0}^+, F^{(\alpha_j)}_n(s)
\right] = 0,\ \forall\ n\in\N.
\] 

Take $n=1$ and the coefficient of $s^{-2}$ using \eqref{eq:1-jet}
to get:
\begin{equation}\label{eq:contra}
\left[
x_{ij,0}^-\otimes x_{ij,0}^+\ ,\ x^-_{j,0}\otimes x^+_{j,1}
- x^-_{j,1}\otimes x^+_{j,0}
\right] = 0.
\end{equation}

\subsection{Rank $2$ computation}\label{ssec:pf-rk2}
 Let us now restrict
our attention to the rank $2$ subsystem generated by $\alpha_i$
and $\alpha_j$. For notational simplicity, we will replace
$i,j$ by $1,2$ and our $2\times 2$ Cartan matrix is of
the following form:
\[
\bfA = \left[\begin{array}{rr}
2 & -p \\ -1 & 2
\end{array}\right],\ \ p=1,2\ \text{or}\ 3.
\]
In this case, $d_1=1$ and $d_2=p$. Let us write $\alpha_3 = \alpha_1+\alpha_2$
and take the following root vectors $x^{\pm}_{3,0}\in\g_{\pm\alpha_3}$,
so that $(x_{3,0}^+,x_{3,0}^-) = 1$:
\[
x^-_{3,0} = [x_{2,0}^-,x_{1,0}^-] \aand
x^+_{3,0} = \frac{1}{p} [x_{1,0}^+,x_{2,0}^+].
\]
Note that by the Serre relations, $x_{3,0}^{\pm}$ commutes with
$x^{\pm}_{2,0}$.

\noindent {\bf Claim.} The defining relations of $\Yhg$
imply that
\begin{equation}\label{eq:contra2}
[x^-_{3,0}\otimes x^+_{3,0}, x_{2,0}^-\otimes x^+_{2,1}
- x^-_{2,1}\otimes x^+_{2,0}] = -2p\hbar
x^-_{3,0}x^-_{2,0}\otimes x^+_{3,0}x^+_{2,0}\,,
\end{equation}
which contradicts \eqref{eq:contra}.

\begin{proof}[Proof of the claim]
Using \ref{Y4} and \ref{Y6}, we obtain the two identities
\begin{gather*}
[x^\pm_{2,1},x^\pm_{1,0}] = [x^\pm_{2,0},x^\pm_{1,1}]
\mp \frac{p\hbar}{2} (x^\pm_{2,0}x^\pm_{1,0}
+ x^\pm_{1,0}x^\pm_{2,0}),\\
[x^\pm_{2,1},[x^\pm_{2,0},x^\pm_{1,0}]]
= -[x^\pm_{2,0},[x^\pm_{2,1},x^\pm_{1,0}]].
\end{gather*}
Combining these two equations,\ and using the fact that, 
by the Serre relations, $x^{\pm}_{2,0}$ commutes with
$[x^{\pm}_{2,0},x^{\pm}_{1,k}]$ for all $k\geq 0$, we obtain 
\begin{equation*}
[x^{\pm}_{2,1},[x^{\pm}_{2,0},x^{\pm}_{1,0}]]
= \pm \hbar p x^{\pm}_{2,0}[x^{\pm}_{2,0},x^{\pm}_{1,0}].
\end{equation*}
Now we can carry out the following computations:
\begin{align*}
[x^+_{3,0},x^+_{2,1}] &= \frac{1}{p}[[x^+_{1,0},x^+_{2,0}],x^+_{2,1}]
= \frac{1}{p} [x^+_{2,1},[x^+_{2,0},x^+_{1,0}]] 
= \hbar x^+_{2,0}[x^+_{2,0},x^+_{1,0}]\\
&= -p\hbar x^+_{2,0}x^+_{3,0},\\[5pt]
[x^-_{3,0},x^-_{2,1}] &= [[x^-_{2,0},x^-_{1,0}],x^-_{2,1}]
= -[x^-_{2,1},[x^-_{2,0},x^-_{1,0}]] 
= p\hbar x^-_{2,0}[x^-_{2,0},x^-_{1,0}] \\
&= p\hbar x^+_{2,0}x^+_{3,0}.
\end{align*}
Hence, the left--hand side of \eqref{eq:contra2} simplifies to
\[
x^-_{3,0}x^-_{2,0}\otimes [x^+_{3,0},x^+_{2,1}]
- [x^-_{3,0},x^-_{2,1}]\otimes x^+_{3,0}x^+_{2,0}
= -2p\hbar x^-_{3,0}x^-_{2,0}\otimes x^+_{3,0}x^+_{2,0}
\]
as claimed. \qedhere
\end{proof}

\section{Conclusions}\label{sec:rks}

In this last section we discuss the Khoroshkin--Tolstoy
conjecture that motivated Theorem~\ref{thm:impossible}.
Moreover, we briefly review the well--known factorization formulae
for the universal $R$--matrices of Drinfeld--Jimbo quantum groups
associated to finite or affine Lie algebras. We then 
observe that our result does not exclude the existence of {\em this
kind} of factorization formulae for the Yangian's $R$--matrix, which
therefore remains an open and challenging problem.

\subsection{The Khoroshkin--Tolstoy conjecture}\label{ssec:KTC}
Let us now explain how Theorem~\ref{thm:impossible} relates to 
the conjectural formula \cite[Eqn. (5.43)]{khoroshkin-tolstoy}. In this section alone, we 
assume that $\hbar$ is a formal variable, following the conventions of \cite{Curtis-DYg}; see in particular \S1.4 therein. We shall return to the case where $\hbar\in \C^\times$ at the end of the section. 

Let $\mathrm{D}\Yhg$ denote the Yangian double, as defined \cite[Defn.~2.1]{khoroshkin-tolstoy} and 
\cite[Defn.~4.1]{Curtis-DYg}. That is, $\mathrm{D}\Yhg$ is the unital, associative
$\C[\![\hbar]\!]$-algebra topologically generated by $\{\xi_{i,r},x_{i,r}^\pm\}_{i\in\bfI,r\in\Z}$,
subject to the same family of relations given in \S \ref{ssec: yangian}
above, where the subscripts $r,s$ now range over $\Z$.
It was proven in Theorem 8.4 of \cite{Curtis-DYg} that $\mathrm{D}\Yhg$ provides a realization of the (restricted) quantum double of the Yangian $\Yhg$, as conjectured in \cite[\S2]{khoroshkin-tolstoy}.   Let $\boldR$
be the universal $R$--matrix of $\mathrm{D}\Yhg$, and let $\boldR^-$ be the
lower triangular factor in its Gaussian decomposition, as first considered in \cite[\S5]{khoroshkin-tolstoy}. We refer the reader to \cite[\S9.3]{Curtis-DYg} for the precise definitions of these elements.

Following the conventions of \cite[\S 5]{khoroshkin-tolstoy},
let $\Sigma_- = \{-\gamma+k\delta : \gamma\in\Phi_+, k\geq 0\}$, where
$\delta$ is the imaginary root of the (affine) root system of $\wh{\g}$.
Choose an arbitrary total order $<$ on $\Sigma_-$ satisfying the
following two conditions:
\begin{itemize}\itemsep0.25cm
\item if $\alpha,\beta,\gamma=\alpha+\beta\in\Sigma_-$, then
either $\alpha<\gamma<\beta$ or $\beta<\gamma<\alpha$\,;
\item $-\gamma+\ell\delta < -\gamma+k\delta$, for $k<\ell$.
\end{itemize}
Given such a total order $<$, it was conjectured in \cite{khoroshkin-tolstoy} (see \cite[Eqn. (5.43)]{khoroshkin-tolstoy}) that $\boldR^-$ admits a multiplicative factorization 
\begin{equation}\label{eq:KTC}
\boldR^- = \prod_{\beta\in\Sigma_-}^<
\exp\lp -\hbar \Omega_\beta\rp,
\end{equation}
where $\Omega_\beta$ is an explicitly defined simple tensor $\Omega_\beta=\omega_\beta^-\otimes \omega_\beta^+$ with $\omega_\beta^-\in Y_\hbar^-(\g)\subset \mathrm{D}\Yhg$, $\omega_\beta^+$ an element of the dual Yangian $Y_\hbar^-(\g)^\star\subset \mathrm{D}\Yhg$ (see \cite[\S6.5]{Curtis-DYg}) and, if $\beta=-\gamma+k\delta$, the factors $\omega_\beta^-$ and $\omega_\beta^+$  have degree $(k,-\gamma)$ and   $(-k-1,\gamma)$ with respect to the standard $\Z\times \mathsf{Q}$ grading on $\mathrm{D}\Yhg$, respectively. This conjecture extends Lemma 5.1 of \cite{khoroshkin-tolstoy}, which established \eqref{eq:KTC} for $\g=\sl_2$. In this case one has $\Omega_{-\alpha_i+k\delta}=x_{i,k}^-\otimes x_{i,-k-1}^+$, where $\bfI=\{i\}$.

In order to relate this conjectural expression to the
results of Section \ref{sec:impossible} above, we make the following
choice for the total order on $\Sigma_-$, where we once again assume $\g\ncong \sl_2$. If $\g$ is not of type $\mathsf{B}_2$ or $\mathsf{G}_2$, we fix $<_1$ to be an arbitrary convex order on the
set $\Phi_+$. In the $\mathsf{B}_2$ and $\mathsf{G}_2$ cases,  we take $<_1$ to be the order given
in \S \ref{ssec:order}. We then extend $<_1$ to $\Sigma_-$ as follows:
\begin{equation*}
-\alpha+\ell\delta <_1 -\gamma+k\delta, \text{ if either }
\alpha<_1\gamma \text{; or } \alpha=\gamma \text{ and } k<\ell.
\end{equation*}
We remark that this is the same ordering
utilized in \cite[Thm. 6.2]{soloviev} for $\g=\sl_3$.
With this choice of ordering, equation \eqref{eq:KTC}
becomes:
\begin{equation}\label{eq:ST}
\boldR^- = \prod_{\alpha\in\Phi_+}^{<_1}
\lp
\prod_{k\geq 0}^{\leftarrow} \exp\left(-\hbar \Omega_{-\alpha+k\delta}\right)
\rp = \prod_{\alpha\in\Phi_+}^{<_1} \mathcal{J}^{(\alpha)},
\end{equation}
where $\mathcal{J}^{(\alpha)}$ is defined to be product over $k\geq 0$ that appears within the parentheses on the right-hand side of the first equality. To pass from this expression back to the setting of Section \ref{sec:impossible}, we recall that the shift homomorphism $\tau_s$ from Section \ref{ssec:R} extends to a ($\Z\times \mathsf{Q}\,$-graded) algebra homomorphism $\Phi_s$ from $\mathrm{D}Y_\hbar(\g)$ into an algebra contained in the space $\Yhg[\![s^{\pm 1}]\!]$; see \cite[Thm.~4.3]{Curtis-Phi} and \cite[Thm.~4.6]{Curtis-DYg}. In particular, $\Phi_s$ satisfies $\Phi_s(\omega_{-\alpha+k\delta}^+)\in s^{-k-1}Y_\hbar^+(\g)_\alpha[\![s^{-1}]\!]$ for all $\alpha\in \Phi_+$ and $k\geq 0$. 

In Theorem 9.6 of \cite{Curtis-DYg}, it is shown that $(\id\otimes \Phi_{-s})(\boldR^-)$ coincides with $\RR^-(s)$; see also \cite{Stukopin07} together with \cite[\S1.3]{Curtis-DYg}. Thus, \eqref{eq:ST}
implies that $\RR^-(s)$ can be written as the ordered product 
\begin{equation*}
\RR^-(s)=\prod_{\alpha\in\Phi_+}^{<_1} \mathcal{J}^{(\alpha)}(s),
\end{equation*}
where, for each $\alpha\in \Phi_+$, $\mathcal{J}^{(\alpha)}(s):=(\id\otimes \Phi_{-s})(\mathcal{J}^{(\alpha)})$. Note that, in the terminology of Section \ref{ssec:impossible}, $\mathcal{J}^{(\alpha)}(s)$ is necessarily a weight zero element of $\Pseries{(Y^{-}_\hbar(\g)
\otimes Y^{+}_\hbar(\g))}{s^{-1}}$ with $\Z\alpha$-support. Here we emphasize that, by Proposition A.1 of \cite{Curtis-DYg}, the above factorization must hold regardless of whether $\hbar$ is viewed as a formal variable or an arbitrary non-zero complex number, as in Sections \ref{sec:YR} and \ref{sec:impossible} above. However, this is impossible as shown
in Theorem~\ref{thm:impossible}, proving that \eqref{eq:KTC}
is false.

\subsection{$R$--matrices of quantum affine algebras}\label{ssec:qla}
For Drinfeld--Jimbo quantum groups, the $R$--matrix can
be expressed as an ordered product of $R$--matrices of $U_q(\sl_2)$
associated to positive roots \cite{kirillov-reshetikhin,
LS1990, LS1991}. The root subalgebras are constructed relying on
Lusztig's braid group action \cite{lusztig-book} and
the factorization is a consequence of the relation
between the coproduct and the quantum Weyl group operators
\cite{kirillov-reshetikhin, LS1991, lusztig-book}.

For quantum affine algebras, this product
ranges over the set of positive affine roots $\wh{\Phi}_+$,
see \eg \cite{levendorskii-soibelman-stukopin-93,beck-braid,beck-PBW} and
\cite[Thm.~2]{damiani}.
Let $\mathbf{R}$ the
universal $R$--matrix of $U_q(\wh{\g})$. Then,
up to a Cartan correction, which for simplicity is suppressed in the current
discussion, one has
\begin{equation}\label{eq:DJ-Rmx}
\mathbf{R} = \prod_{\alpha\in\wh{\Phi}_+}
^{\prec'} \exp_{q_\alpha}\lp (q_\alpha-q_{\alpha}^{-1})
E_\alpha\otimes F_\alpha\rp\,.
\end{equation}
For full details about the total order $\prec'$ on $\wh{\Phi}_+$,
we refer the reader to \cite{beck-PBW,damiani}. Here, we only
need few of its salient features. 
Recall that the set of affine positive roots is given by $\wh{\Phi}_+
= \wh{\Phi}_+^{\re} \sqcup \wh{\Phi}_+^{\im}$, 
where
$\wh{\Phi}_+^{\im} = \Z_{\geq 1}\delta$ and $\wh{\Phi}_+^{\re} = \wh{\Phi}_{+,+}^{\re}\cup \wh{\Phi}_{+,-}^{\re}$ with
\begin{equation*}
\wh{\Phi}_{+,+}^{\re} = \{\alpha+k\delta : \alpha\in\Phi_+,k\in\N\}
\quad\text{ and }\quad
\wh{\Phi}_{+,-}^{\re} = \{-\alpha+\ell\delta : \alpha\in\Phi_+, \ell\in\Z_{\geq 1}\}.
\end{equation*}
Then, $\prec'$ has the following properties.
\begin{enumerate}\itemsep0.25cm
\item $x\prec'y\prec'z$ for $x\in \wh{\Phi}^{\re}_{+,+}$,
$y\in \wh{\Phi}^{\im}_+$ and $z\in \wh{\Phi}^{\re}_{+,-}$.

\item The total order $\prec'$ restricted to $\wh{\Phi}^{\re}_{+,-}$
is convex.

\item For any $z_1, z_2\in\wh{\Phi}^{\re}_{+,-}$, the interval
$\{z : z_1\prec' z\prec' z_2\}$ is finite.
\end{enumerate}
Note that the property (3) does not hold for the (general) 
total orders considered in \cite{khoroshkin-tolstoy}, the ones 
featuring in Theorem~\ref{thm:impossible} and ultimately in the equation 
\eqref{eq:ST}.

By property (1) and equation \eqref{eq:DJ-Rmx}, the universal $R$--matrix
$\mathbf{R}$ factors into three components,
which we denote by $\mathbf{R}^+$, $\mathbf{R}^0$ and $\mathbf{R}^-$
respectively.
By the results of \cite{sachin-valerio-III}, we expect that an analogue of Theorem~\ref{thm:impossible} holds for $\mathbf{R}^-$, \ie 
$\mathbf{R}^-$ cannot be expressed as an ordered product of the
form \eqref{eq:ST}.

Conversely, our results do not exclude the possibility of a
factorization of $\boldR^-$ or $\RR^-(s)$ similar to that of $\mathbf{R}^-$,
\ie a factorization of the form \eqref{eq:DJ-Rmx} for an order satisfying the properties above. 
To the best of our knowledge, no such result in the case of Yangians 
or Yangian doubles is known, and remains an interesting
and challenging open problem.


\bibliographystyle{amsplain}
\providecommand{\bysame}{\leavevmode\hbox to3em{\hrulefill}\thinspace}
\providecommand{\MR}{\relax\ifhmode\unskip\space\fi MR }
\providecommand{\MRhref}[2]{%
  \href{http://www.ams.org/mathscinet-getitem?mr=#1}{#2}
}
\providecommand{\href}[2]{#2}

\end{document}